\documentclass[11pt,letterpaper,reqno]{amsart}
\usepackage{amssymb}
\usepackage{amsmath}
\usepackage{amsthm}
\usepackage{amsfonts}
\usepackage{enumitem}
\usepackage{booktabs}
\usepackage{graphicx}
\usepackage{xcolor}
\usepackage[T1]{fontenc}
\usepackage{doi}
\usepackage{comment}
\usepackage{hyperref}
\hypersetup{pdfstartview={FitH}}
\usepackage{bookmark}
\usepackage[capitalize,noabbrev]{cleveref}

\newtheorem{theorem}{Theorem}[section]
\newtheorem{lemma}[theorem]{Lemma}

\theoremstyle{definition}

\theoremstyle{plain}
\newtheorem*{openproblem}{Problem}

\DeclareMathOperator{\tr}{tr}

\begin{document}

\title[Fixed-volume Steklov product inequality]{A sharp fixed-volume product inequality for the first $N$ nonzero Steklov eigenvalues}

\author[H.~Zhang]{Haiqi Zhang}
\address{School of Mathematics, Shandong University, Jinan 250100, P. R. China}
\email{ZHQAQ2024@outlook.com}

\author[Q.~Tang]{Quanyu Tang}
\address{School of Mathematics and Statistics, Xi'an Jiaotong University, Xi'an 710049, P. R. China}
\email{tangquanyu827@gmail.com}

\author[Y.~Li]{Yanyang Li}
\address{School of Mathematics, Southeast University, Nanjing 211189, P. R. China}
\email{liyanyang1219@gmail.com}

\subjclass[2020]{Primary 35P15; Secondary 49R05}

\keywords{Steklov eigenvalues, Spectral geometry, Isoperimetric inequality}

\begin{abstract}
We prove a sharp fixed-volume product inequality for the first $N$ nonzero Steklov eigenvalues of bounded Lipschitz domains in $\mathbb R^N$. More precisely, if $N\ge2$ and $\Omega\subset\mathbb R^N$ is a bounded Lipschitz domain, then
$$
   \prod_{j=1}^N \sigma_j(\Omega)\le \frac{\omega_N}{|\Omega|},
$$
where $0=\sigma_0(\Omega)<\sigma_1(\Omega)\le\sigma_2(\Omega)\le\cdots$ are the Steklov eigenvalues of $\Omega$, and $\omega_N$ denotes the volume of the unit ball in $\mathbb R^N$. This extends the convex-domain theorem of Henrot, Philippin, and Safoui to arbitrary bounded Lipschitz domains, and in particular settles the remaining higher-dimensional case of a problem posed by Henrot.
\end{abstract}

\maketitle

\section{Introduction}

Throughout the paper, a domain means a connected open set. Let \(\Omega\subset\mathbb R^N\), \(N\ge2\), be a bounded Lipschitz domain. The Steklov eigenvalue problem is
\[
\begin{cases}
\Delta u=0 & \text{in }\Omega,\\
\frac{\partial u}{\partial \nu}=\sigma u & \text{on }\partial\Omega,
\end{cases}
\]
where \(\nu\) denotes the outer unit normal. Equivalently, the Steklov
eigenvalues are the eigenvalues of the Dirichlet-to-Neumann operator. Since
\(\Omega\) is Lipschitz, the spectrum is discrete and may be written as
\[
0=\sigma_0(\Omega)<\sigma_1(\Omega)\le \sigma_2(\Omega)\le\cdots\nearrow\infty,
\]
with eigenvalues repeated according to multiplicity; see, for instance,
\cite{GirouardPolterovich2017}.

Since \(\Omega\) is Lipschitz, \(\mathcal H^{N-1}(\partial\Omega)<\infty\); we denote this boundary measure by \(dS\). We shall use the following notation throughout the paper: \(d\theta\) denotes surface measure on \(\mathbb S^{N-1}\), and \(I=I_N\) denotes the \(N\times N\) identity matrix. For an invertible linear map \(C:\mathbb R^N\to\mathbb R^N\), we write
\(C\Omega=\{Cx:x\in\Omega\}\).

The eigenvalues satisfy the scaling law
\[
\sigma_j(t\Omega)=t^{-1}\sigma_j(\Omega),\qquad t>0.
\]
Thus the quantity
\[
\mathcal P_N(\Omega):=
|\Omega|\prod_{j=1}^N\sigma_j(\Omega)
\]
is invariant under dilations. For the Euclidean ball \(B_R:=\{x\in\mathbb R^N: |x|<R\}\), the first nonzero
Steklov eigenvalue has multiplicity \(N\), and
\[
\sigma_1(B_R)=\cdots=\sigma_N(B_R)=\frac1R.
\]
Since \(|B_R|=\omega_N R^N\), where \(\omega_N=|B_1|\), the ball satisfies
\begin{equation}\label{eq:ball-value}
\prod_{j=1}^N\sigma_j(B_R)=\frac{\omega_N}{|B_R|}.
\end{equation}

Sharp isoperimetric inequalities for Steklov eigenvalues have a long history
and are strongly sensitive to the choice of normalization and to the topology
of the domain. For recent surveys and developments on Steklov eigenvalue
problems and related geometric bounds, see
\cite{ColboisGirouardGordonSher2024,GirouardPolterovich2017}. In the plane, Weinstock proved that the disk maximizes
\(\sigma_1(\Omega)|\partial\Omega|\) among simply connected domains
\cite{Weinstock1954}. Hersch, Payne, and Schiffer later obtained product-type
inequalities for planar domains \cite{HerschPayneSchiffer1974}. Subsequent
work of Girouard and Polterovich established sharpness phenomena for higher
Steklov eigenvalues in the simply connected planar case
\cite{GirouardPolterovich2010HPS}. More recently, Girouard, Karpukhin, and
Lagac\'e solved the perimeter-normalized problem for planar domains: the sharp
upper bound for the \(k\)-th normalized Steklov eigenvalue is \(8\pi k\)
\cite{GirouardKarpukhinLagace2021}. This line of work is adjacent to the
present paper, but it concerns perimeter normalization rather than volume
normalization.

Under a volume constraint, Brock proved the sharp reciprocal-sum inequality
for the first \(N\) nonzero Steklov eigenvalues,
\[
\sum_{j=1}^{N}\frac{1}{\sigma_j(\Omega)}
\ge
N\left(\frac{|\Omega|}{\omega_N}\right)^{1/N},
\]
for bounded Lipschitz domains \(\Omega\subset\mathbb R^N\)
\cite{BrascoDePhilippisRuffini2012,Brock2001}. In particular, this implies
the Brock--Weinstock inequality that the ball maximizes the first nonzero
Steklov eigenvalue under a volume constraint. However, this reciprocal-sum
estimate does not directly control the product
\(\prod_{j=1}^N\sigma_j(\Omega)\). Related fixed-volume
optimization problems for Steklov eigenvalues and more general spectral
functionals were studied by Bogosel, Bucur, and Giacomini in a relaxed setting
\cite{BogoselBucurGiacomini2017}. In a different normalization, Fraser and
Schoen showed that in dimensions \(N\ge3\) the ball does not maximize
\(\sigma_1\) among contractible domains of fixed boundary measure
\cite{FraserSchoen2019}. These results illustrate the sensitivity of Steklov
optimization problems to the normalization and to the admissible class.

The product problem was settled in the convex class by Henrot, Philippin, and
Safoui. They proved that if \(\Omega\subset\mathbb R^N\) is convex and
\(\Omega^\ast\) is the ball with \(|\Omega^\ast|=|\Omega|\), then
\[
\prod_{j=1}^N\sigma_j(\Omega)
\le
\prod_{j=1}^N\sigma_j(\Omega^\ast),
\]
with equality only for the ball \cite{HenrotPhilippinSafoui2008}. Their proof
uses an isoperimetric inequality for a product of boundary moments of inertia,
whose proof is convex-geometric in dimensions \(N\ge3\). The natural question
whether the convexity assumption can be removed was posed explicitly in
Henrot's monograph \cite[Open problem 26]{Henrot2006}; it was also recalled in \cite[Remark~4.3]{BrascoDePhilippisRuffini2012}. In dimension \(N=2\),
however, the convexity assumption in this product inequality can already be
dropped; see the planar argument in
\cite[Section~5]{HenrotPhilippinSafoui2008} and the discussion in
\cite[Remark~4.3]{BrascoDePhilippisRuffini2012}. Thus the remaining open case
is the removal of convexity in dimensions \(N\ge3\).

\begin{openproblem}[Henrot]
Let \(N\ge3\), and let \(\Omega\subset\mathbb R^N\) be a bounded Lipschitz domain. Is it true that
\[
\prod_{j=1}^N\sigma_j(\Omega)
\le
\frac{\omega_N}{|\Omega|}?
\]
Equivalently, among bounded Lipschitz domains of prescribed volume,
does the ball maximize the product of the first \(N\) nonzero Steklov
eigenvalues?
\end{openproblem}

The purpose of the present paper is to answer this remaining
higher-dimensional problem affirmatively. Our main result is the following.

\begin{theorem}\label{thm:main}
Let \(N\ge2\), and let \(\Omega\subset\mathbb R^N\) be a bounded Lipschitz domain. Then
\begin{equation}\label{eq:main}
\prod_{j=1}^{N}\sigma_j(\Omega)\le \frac{\omega_N}{|\Omega|}.
\end{equation}
Equality is attained by balls. Conversely, if equality holds, then, after a
translation, \(\Omega\) agrees with a ball up to a set of Lebesgue measure zero.
\end{theorem}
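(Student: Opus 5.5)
The strategy is to combine a variational characterization of the product with a test-function argument that uses linear maps, in the spirit of Brock's weighted isoperimetric inequality. We test the Rayleigh quotient on linear functions, after a translation making $\int_{\partial\Omega}x\,dS=0$. Define the boundary moment matrix
\[
M=\int_{\partial\Omega} x\,x^{T}\,dS ,
\]
which is symmetric and positive definite. By the min--max principle for the Steklov problem, any $N$-dimensional space of functions orthogonal to constants in $L^2(\partial\Omega)$ yields the trace-type inequality
\[
\sum_{j=1}^N \frac1{\sigma_j}\ \ge\ \tr\big(G^{-1}\cdots\big).
\]
More usefully, we use its multiplicative form. For the coordinate functions, the Dirichlet form matrix is $|\Omega| I$ and the boundary Gram matrix is $M$. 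Simultaneous diagonalization, together with the Ky Fan / Poincaré interlacing principle (the $k$-th eigenvalue of the pencil is at least $\sigma_k$), gives
\[
\prod_{j=1}^N \sigma_j(\Omega)\ \le\ \frac{|\Omega|^N}{\det M}.
\]
It therefore suffices to prove the determinant isoperimetric inequality
\[
\det M\ \ge\ \frac{|\Omega|^{N+1}}{\omega_N},
\]
with equality only for balls. Henrot--Philippin--Safoui proved exactly this for convex sets.

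To prove the moment inequality for general Lipschitz $\Omega$, we exploit affine invariance. For $C\in SL(N)$, the change of variables applied to the boundary measure is not affine-equivariant, so instead we use AM--GM in the form
\[
(\det M)^{1/N}=\min_{C\in SL(N)}\frac1N\tr(C M C^{T}),
\]
where
\[
\tr(CMC^{T})=\int_{\partial\Omega}|Cx|^2\,dS.
\]
We then need
\[
\int_{\partial\Omega}|Cx|^2\,dS\ \ge\ N\,\omega_N^{-1/N}|\Omega|^{(N+1)/N}
\]
for every $C\in SL(N)$. This is a Brock-type weighted isoperimetric inequality: $\int_{\partial\Omega}|x|^2dS\ge \int_{\partial B}|x|^2dS$ for balls of equal volume, as used in Brock's proof and in Betta--Brock--Mercaldo--Posteraro. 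That inequality holds for Lipschitz sets and, after translating so that the boundary barycenter is zero, covers $C=I$. The difficulty is that for general $C$ the weight $|Cx|^2$ is anisotropic.

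The plan is to handle the anisotropy through the divergence theorem. On $\partial\Omega$ we have $|Cx|^2\ge |Cx|^2\,|(\text{something})\cdot\nu|$, and the natural choice is the vector field $F(x)=|Cx|\,C^TCx/|Cx|\cdot(\ldots)$. Concretely, we would like to prove
\[
\int_{\partial\Omega}|Cx|^2dS\ \ge\ \int_{\partial\Omega}|Cx|\,\frac{(C^TCx)\cdot\nu}{\|C^TC x\|/|Cx|}\,dS ,
\]
then apply Cauchy--Schwarz and the divergence theorem, ending with a symmetrization: rearrange $\Omega$ to the ellipsoid $C^{-1}B$ of equal volume, on which the weight level sets are adapted. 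I expect this to be the main obstacle. The bound must come from a pointwise inequality $|Cx|^2\ge V(x)\cdot\nu$, for a field $V$ whose divergence is a function that is radially increasing in the $|Cx|$-norm. One then needs $\int_\Omega \operatorname{div}V\ge\int_{E}\operatorname{div}V$ for the ellipsoid $E$ of the same volume, which is the bathtub principle. The delicate point is choosing $V$ so that the pointwise bound holds with the Euclidean normal; if a single $V$ fails, I would instead minimize over $C$ first and use the first-order optimality condition $CMC^T=(\det M)^{1/N}I$, which reduces matters to the isotropic case $C=I$ covered by Brock's inequality.

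For equality, we trace the cases of equality. Equality in Brock's weighted inequality forces $\Omega$ to be a ball centered at the origin, up to a null set. Equality of balls follows from \eqref{eq:ball-value}.
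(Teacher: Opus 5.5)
The first half of your argument coincides with the paper's. Centering, then testing on linear functions, gives $\prod_j\sigma_j(\Omega)\le|\Omega|^N/\det M$. Your identity $(\det M)^{1/N}=\min_{C\in SL(N)}\frac1N\tr(CMC^\top)$ then reduces everything to the anisotropic weighted inequality $\int_{\partial\Omega}|Cx|^2\,dS\ge N\omega_N^{-1/N}|\Omega|^{(N+1)/N}$ for $\det C=1$. The paper uses exactly this, applying it with $C^\top C$ proportional to $M^{-1}$. But that inequality is the whole difficulty, and you do not prove it.

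\textbf{The fallback is invalid.} The optimality condition $C_0MC_0^\top=(\det M)^{1/N}I$ does not let you invoke the isotropic (Betta--Brock--Mercaldo--Posteraro) inequality. Applying that inequality to $C_0\Omega$ bounds $\int_{\partial\Omega}|C_0x|^2\,|C_0^{-\top}\nu|\,dS$, not $\int_{\partial\Omega}|C_0x|^2\,dS$. This is precisely the non-equivariance of surface measure that you pointed out yourself.

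\textbf{The primary plan is unspecified.} You propose a field with $|V(x)|\le|Cx|^2$ whose divergence depends only on $|Cx|$, compared with an ellipsoid via the bathtub principle, but you never produce $V$. The natural candidate $V=k|Cx|\,x$ has $\operatorname{div}V=(N+1)k|Cx|$. However, $|V|\le|Cx|^2$ forces $k\le\min_{|x|=1}|Cx|$, which is $<1$ when $\det C=1$ and $C^\top C\ne I$. So this route loses the sharp constant by the factor $k$.

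\textbf{The missing idea} is the paper's Lemma~\ref{lem:anisotropic}:
\begin{itemize}
\item Take $C=A^{1/2}$ and $F(x)=|Cx|\,Cx$, so that $|F|=x^\top Ax$ exactly.
\item In $y=Cx$, its divergence is $g_C(y)=|y|\tr C+y^\top Cy/|y|$. This is $1$-homogeneous but not radial.
\item Hence the bathtub comparison must be made with the sublevel sets of $g_C$ itself, not with ellipsoids. This gives $\int_{C\Omega}g_C\ge\frac{N}{N+1}\kappa_C^{-1/N}|C\Omega|^{(N+1)/N}$ with $\kappa_C=|\{g_C<1\}|$.
\item The sharp constant comes from the volume bound $\kappa_C\le\omega_N(N+1)^{-N}(\det C)^{-1}$.
\item That bound uses AM--GM on the $N+1$ numbers $\lambda_1,\dots,\lambda_N,\theta^\top C\theta$ together with $\frac1{N\omega_N}\int_{\mathbb S^{N-1}}(\theta^\top C\theta)^{-N/(N+1)}\,d\theta\le(\det C)^{-1/(N+1)}$. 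The latter follows from the Gaussian identity at exponent $N/2$ plus Jensen.
\end{itemize}

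Your equality argument inherits the same gap. In the paper, rigidity comes from strict concavity in that Jensen step, which forces $C$ (hence $M$) to be scalar. Only then does the equality case of the bathtub principle give a ball. Rigidity does not follow from the isotropic case alone.
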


In view of \eqref{eq:ball-value}, the constant in \eqref{eq:main} is optimal.

The main new ingredient is the following determinant estimate. After
translating the domain so that its boundary barycenter is at the origin, set
\[
M_\Omega:=\int_{\partial\Omega}x x^\top\,dS.
\]
We prove the sharp lower bound
\[
\det M_\Omega\ge \frac{|\Omega|^{N+1}}{\omega_N},
\]
for all bounded Lipschitz domains, with equality only for balls, up to null
sets. This replaces
the convexity-dependent boundary moment product estimate used in
\cite{HenrotPhilippinSafoui2008}. Combining this determinant estimate with the variational characterization of the Steklov eigenvalues, tested on the coordinate functions, gives the desired product bound.

The connectedness assumption is essential in the formulation above. If
disconnected domains are allowed and one skips all zero eigenvalues, then the
statement is false: the union of \(m\ge2\) equal balls of total volume \(V\)
has its first \(N\) positive Steklov eigenvalues all equal to \(1/r\), where
\(m\omega_N r^N=V\), and hence the product is \(m\omega_N/V\), larger than
the value \(\omega_N/V\) for a single ball.

The paper is organized as follows. In Section~\ref{sec:determinant-estimate}
we prove the determinant estimate for \(M_\Omega\). This is the main geometric
input of the proof. In Section~\ref{sec:steklov-product} we combine this
estimate with the standard Steklov variational principle for the coordinate
functions to prove Theorem~\ref{thm:main}.

\section{A determinant estimate}\label{sec:determinant-estimate}

In this section we prove the determinant estimate for \(M_\Omega\). We start
with a spherical integral estimate.

\begin{lemma}\label{lem:spherical-moment}
Let \(C\) be a positive definite symmetric \(N\times N\) matrix. For every \(0\le p\le N/2\),
\begin{equation}\label{eq:spherical-moment}
\frac{1}{N\omega_N}\int_{\mathbb S^{N-1}}(\theta^\top C\theta)^{-p}\,d\theta
\le
(\det C)^{-p/N}.
\end{equation}
Moreover, if \(0<p<N/2\), then equality in \eqref{eq:spherical-moment} holds if and only if \(C=cI\) for some \(c>0\).
\end{lemma}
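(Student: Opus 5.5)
The plan is to prove the endpoint case \(p=N/2\) as an exact identity and then deduce the range \(0<p<N/2\) from Hölder's inequality (equivalently, Jensen's inequality) on the normalized sphere. Throughout, write \(f(\theta):=\theta^\top C\theta\). Since \(C\) is positive definite, \(f\) is continuous and strictly positive on \(\mathbb S^{N-1}\), so every integral below is finite. The case \(p=0\) is trivial, since both sides of \eqref{eq:spherical-moment} equal \(1\).

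\emph{Step 1: the case \(p=N/2\) holds with equality.} Consider the ellipsoid \(E:=\{x\in\mathbb R^N: x^\top Cx<1\}\). Write \(x=r\theta\) in polar coordinates. Then \(x\in E\) if and only if \(r<f(\theta)^{-1/2}\), and therefore
\[
|E|=\int_{\mathbb S^{N-1}}\int_0^{f(\theta)^{-1/2}} r^{N-1}\,dr\,d\theta=\frac1N\int_{\mathbb S^{N-1}} f(\theta)^{-N/2}\,d\theta .
\]
On the other hand, \(E=C^{-1/2}B_1\), so \(|E|=\omega_N(\det C)^{-1/2}\). Comparing the two expressions gives
\[
\frac{1}{N\omega_N}\int_{\mathbb S^{N-1}}(\theta^\top C\theta)^{-N/2}\,d\theta=(\det C)^{-1/2}.
\]
This is \eqref{eq:spherical-moment} with equality for every positive definite \(C\).

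\emph{Step 2: the range \(0<p<N/2\).} Let \(d\mu:=d\theta/(N\omega_N)\). Since \(|\mathbb S^{N-1}|=N\omega_N\), \(\mu\) is a probability measure. Set \(q:=N/(2p)>1\). Applying Hölder's inequality (or Jensen's inequality for the concave function \(t\mapsto t^{1/q}\)) to \(g:=f^{-p}\) gives
\[
\int g\,d\mu\le\Bigl(\int g^{q}\,d\mu\Bigr)^{1/q}=\Bigl(\int f^{-N/2}\,d\mu\Bigr)^{2p/N}=(\det C)^{-p/N},
\]
where the last equality uses Step 1.

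\emph{Step 3: the equality case.} For \(q>1\), equality in Hölder's inequality against the constant function \(1\) on a probability space forces \(g\) to be \(\mu\)-a.e. constant. Since \(f\) is continuous, this means \(\theta^\top C\theta\) is constant on the whole sphere. Diagonalize \(C\) and evaluate \(f\) at the unit eigenvectors: all eigenvalues must coincide, so \(C=cI\) with \(c>0\). Conversely, if \(C=cI\), both sides equal \(c^{-p}\).

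I do not expect a genuine obstacle here. The one point that needs care is the polar-coordinates volume identity in Step 1, which is what produces the exact factor \(N\omega_N\). Note also that equality holds for every \(C\) when \(p=N/2\), which is consistent with the lemma restricting the rigidity statement to \(0<p<N/2\).
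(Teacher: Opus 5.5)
Your proposal is correct and follows the paper's proof: an exact identity at the endpoint $p=N/2$, then Jensen/H\"older for the concave power $s\mapsto s^{2p/N}$ on the normalized sphere, with strict concavity and continuity forcing $\theta^\top C\theta$ to be constant, hence $C=cI$. The only difference is that you obtain the endpoint identity from the volume of the ellipsoid $\{x: x^\top Cx<1\}=C^{-1/2}B_1$ in polar coordinates. The paper instead evaluates the Gaussian integral $\int_{\mathbb R^N}e^{-x^\top Cx}\,dx$ in two ways; your route is slightly more direct because it avoids the normalization $\frac12\Gamma(N/2)\,N\omega_N=\pi^{N/2}$.
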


\begin{proof}
Let \(d\mu=(N\omega_N)^{-1}d\theta\) be normalized surface measure on \(\mathbb S^{N-1}\). Then
\begin{align*}
\int_{\mathbb R^N}e^{-x^\top Cx}\,dx 
&=
\int_{\mathbb S^{N-1}} \left( \int_0^\infty e^{-r^2(\theta^\top  C \theta)} r^{N-1} dr \right) d\theta \\
&=\frac{1}{2}\int_{\mathbb S^{N-1}} (\theta^\top  C \theta)^{-N/2} d\theta\int_{0}^{\infty} e^{-u} u^{(N/2)-1} du \\
&=\frac12\Gamma\left(\frac N2\right)
\int_{\mathbb S^{N-1}}(\theta^\top C\theta)^{-N/2}\,d\theta.
\end{align*}
On the other hand, we have
\[
\int_{\mathbb R^N}e^{-x^\top Cx}\,dx 
=(\det C)^{-1/2} \prod_{i=1}^N \left( \int_{-\infty}^\infty e^{-y_i^2} dy_i \right)
=\pi^{N/2} (\det C)^{-1/2}.
\]
Applying the same identity with \(C=I\) yields
\[
\frac12\Gamma\left(\frac N2\right)N\omega_N=\pi^{N/2},
\]
and hence
\begin{equation}\label{eq:spherical-endpoint}
\int_{\mathbb S^{N-1}}(\theta^\top C\theta)^{-N/2}\,d\mu(\theta)
=
(\det C)^{-1/2}.
\end{equation}

Set $X(\theta)=(\theta^\top C\theta)^{-N/2}$ and $t=2p/N\in[0,1]$. Since \(s\mapsto s^t\) is concave on \((0,\infty)\) for \(0\le t\le1\), Jensen's inequality and \eqref{eq:spherical-endpoint} give
\[
\int_{\mathbb S^{N-1}}(\theta^\top C\theta)^{-p}\,d\mu(\theta)
=
\int_{\mathbb S^{N-1}}X(\theta)^t\,d\mu(\theta)
\le
\left(\int_{\mathbb S^{N-1}}X(\theta)\,d\mu(\theta)\right)^t
=
(\det C)^{-p/N}.
\]
This proves \eqref{eq:spherical-moment}.

If \(0<p<N/2\), then \(0<t<1\), and \(s\mapsto s^t\) is strictly concave. Equality in Jensen's inequality therefore forces \(X\) to be constant \(\mu\)-almost everywhere. By continuity, \(\theta^\top C\theta\) is constant on all of \(\mathbb S^{N-1}\). A quadratic form that is constant on the unit sphere is a scalar multiple of \(|\theta|^2\), so \(C=cI\) for some \(c>0\). The converse is immediate.
\end{proof}

We shall also use the following homogeneous version of the bathtub principle;
compare \cite[Theorem~1.14]{LiebLoss2001}. We include the short proof in order
to record the sharp constant and the equality case in the present form.

\begin{lemma}\label{lem:homogeneous-bathtub}
Let \(g:\mathbb R^N\to[0,\infty)\) be continuous, positive on \(\mathbb S^{N-1}\), and homogeneous of degree one. Put
\[
\kappa=|\{y\in\mathbb R^N:g(y)<1\}|.
\]
Then, for every measurable set \(E\subset\mathbb R^N\) with \(|E|<\infty\),
\begin{equation}\label{eq:homogeneous-bathtub}
\int_E g(y)\,dy
\ge
\frac{N}{N+1}\kappa^{-1/N}|E|^{(N+1)/N}.
\end{equation}
If \(|E|>0\), equality holds in \eqref{eq:homogeneous-bathtub} if and only if
\[
E=\{y:g(y)<r\}
\quad\text{up to a null set},
\qquad
r=\left(\frac{|E|}{\kappa}\right)^{1/N}.
\]
\end{lemma}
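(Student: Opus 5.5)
Let \(K_r=\{y:g(y)<r\}\). By homogeneity of degree one, \(K_r=rK_1\), so \(|K_r|=\kappa r^N\). The plan is the standard bathtub comparison of \(E\) with the sublevel set \(K_r\) of the same measure, \(r=(|E|/\kappa)^{1/N}\), followed by an exact computation of \(\int_{K_r}g\).

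\emph{Preliminaries.} Since \(g\) is continuous and positive on the compact sphere, we have \(m:=\min_{\mathbb S^{N-1}}g>0\). Hence \(g(y)\ge m|y|\), so \(K_1\) is bounded and \(\kappa<\infty\). Also \(K_1\) is open and nonempty (it contains \(0\)), so \(\kappa>0\). If \(|E|=0\) the inequality is trivial, so assume \(|E|>0\) and set \(r\) as above, so that \(|K_r|=|E|\).

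\emph{Comparison step.} Write \(A=E\setminus K_r\) and \(B=K_r\setminus E\). Then \(|A|=|B|\) because \(|E|=|K_r|\) and both are finite. On \(A\) we have \(g\ge r\), and on \(B\) we have \(g<r\). Therefore
\[
\int_E g-\int_{K_r} g=\int_A g-\int_B g\ge r|A|-\int_B g=\int_B (r-g)\ge 0 .
\]

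\emph{Computation.} By the layer cake formula and scaling,
\[
\int_{K_r}g=\int_0^r |\{s\le g<r\}|\,ds=\int_0^r \kappa(r^N-s^N)\,ds=\kappa\frac{N}{N+1}r^{N+1},
\]
using \(|\{g<s\}|=\kappa s^N\) and the fact that level sets \(\{g=s\}\) are null for \(s>0\). The null-set claim holds because \(\{g\le s\}=\bigcap_{t>s}\{g<t\}\) has measure \(\kappa s^N\) by continuity of \(t\mapsto\kappa t^N\). Substituting \(r^{N+1}=(|E|/\kappa)^{(N+1)/N}\) gives exactly \(\frac{N}{N+1}\kappa^{-1/N}|E|^{(N+1)/N}\), which proves \eqref{eq:homogeneous-bathtub}.

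\emph{Equality case.} This is the only delicate point. If equality holds, then both inequalities in the comparison chain are equalities. In particular \(\int_B(r-g)=0\) with \(r-g>0\) on \(B\), so \(|B|=0\), and hence \(|A|=|B|=0\). Thus \(E=K_r\) up to a null set. The same reasoning also shows that the equality \(r|A|=\int_A g\) is automatic once \(|A|=0\). Conversely, if \(E=K_r\) a.e., the computation above gives equality.
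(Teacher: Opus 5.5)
Your proposal is correct and follows essentially the same route as the paper: you compare $E$ with the sublevel set $K_r$ of equal measure using $|E\setminus K_r|=|K_r\setminus E|$, and then compute $\int_{K_r}g$ by the layer-cake formula. The only difference is in the equality case, where you get $|E\setminus K_r|=0$ from $|K_r\setminus E|=0$ via the equal-measure identity, rather than from the nullity of the level set $\{g=r\}$ as the paper does; this is a slight simplification.
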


\begin{proof}
Let $|E|=W$. By continuity and positivity on the sphere, there are constants \(0<m\le M<\infty\) such that
\[
m|y|\le g(y)\le M|y|,
\qquad y\in\mathbb R^N.
\]
Hence \(0<\kappa<\infty\). Let \(K_t=\{y:g(y)<t\}\). Since \(g\) is homogeneous of degree one,
\[
K_t=tK_1,
\qquad
|K_t|=\kappa t^N,
\qquad t>0.
\]
Moreover, for every \(t>0\),
\[
|\{g=t\}|
\le
\lim_{\varepsilon\downarrow0}|\{t-\varepsilon<g<t+\varepsilon\}|
\le
\lim_{\varepsilon\downarrow0}\kappa\bigl((t+\varepsilon)^N-(t-\varepsilon)^N\bigr)
=0.
\]

If \(W=0\), there is nothing to prove. Assume \(W>0\) and set
\[
r=\left(\frac{W}{\kappa}\right)^{1/N},
\qquad
K=K_r.
\]
Then \(|K|=|E|=W\), so \(|E\setminus K|=|K\setminus E|\). If
\(\int_E g=+\infty\), then the desired inequality is immediate. Otherwise
all the following quantities are finite, since \(K\) is bounded and \(g\) is
bounded on \(K\). Using \(|E\setminus K|=|K\setminus E|\), we obtain
\begin{align*}
\int_E g-\int_K g
&=
\int_{E\setminus K}g-\int_{K\setminus E}g \\
&=
\int_{E\setminus K}(g-r)+\int_{K\setminus E}(r-g) \\
&\ge 0,
\end{align*}
because \(g\ge r\) on \(E\setminus K\) and \(g<r\) on \(K\setminus E\).
Thus \(K\) minimizes \(\int_E g\) among sets of measure \(W\).

It remains to compute \(\int_K g\). By scaling,
\begin{align*}
\int_K g
&=
r^{N+1}\int_{K_1}g \\
&=
r^{N+1}\int_0^1 |\{y\in K_1:g(y)>s\}|\,ds \\
&=
r^{N+1}\int_0^1 \kappa(1-s^N)\,ds \\
&=
\frac{N}{N+1}\kappa r^{N+1}
=
\frac{N}{N+1}\kappa^{-1/N}W^{(N+1)/N}.
\end{align*}
This proves \eqref{eq:homogeneous-bathtub}.

If equality holds, then equality must hold in the comparison with \(K\), hence
\[
\int_{E\setminus K}(g-r)\,dy
+
\int_{K\setminus E}(r-g)\,dy
=
0.
\]
Both integrands are nonnegative. Therefore \(g=r\) a.e. on \(E\setminus K\),
and \(g=r\) a.e. on \(K\setminus E\). The latter set is contained in
\(K=\{g<r\}\), hence it is null. The former is contained, up to a null set, in
the level set \(\{g=r\}\), which has measure zero. Hence \(E=K\) up to a null
set. The converse is immediate from the computation above.
\end{proof}

Next we prove an anisotropic weighted isoperimetric inequality.

\begin{lemma}\label{lem:anisotropic}
Let \(\Omega\subset\mathbb R^N\) be a bounded Lipschitz domain of positive measure, and let \(A\) be a positive definite symmetric \(N\times N\) matrix. Then
\begin{equation}\label{eq:anisotropic}
\int_{\partial\Omega}x^\top Ax\,dS
\ge
N\omega_N^{-1/N}(\det A)^{1/N}|\Omega|^{(N+1)/N}.
\end{equation}
\end{lemma}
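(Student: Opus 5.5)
The plan is a calibration (divergence-theorem) argument that reduces \eqref{eq:anisotropic} to the homogeneous bathtub principle of Lemma~\ref{lem:homogeneous-bathtub}. The constant is then controlled by a spherical integral of the type in Lemma~\ref{lem:spherical-moment}. Write \(q(x)=x^\top Ax\) and \(B=A^{1/2}\). The idea is to find a Lipschitz vector field \(F\) on \(\mathbb R^N\) with \(|F(x)|\le q(x)\) whose divergence is a nonnegative function \(g\) homogeneous of degree one. Then, since \(\Omega\) is Lipschitz, the divergence theorem gives
\[
\int_{\partial\Omega}q\,dS\ \ge\ \int_{\partial\Omega}F\cdot\nu\,dS\ =\ \int_\Omega g\,dx .
\]

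\emph{Choice of \(F\).} I will take
\[
F(x)=|Bx|\,Bx ,
\]
so that \(|F|=|Bx|^2=q\) exactly. With \(y=Bx\), a direct computation gives
\[
g(x)=\operatorname{div}F=|y|\,\tr B+\frac{y^\top B y}{|y|} .
\]
This is continuous, homogeneous of degree one, and positive on \(\mathbb S^{N-1}\). For \(A=aI\) it reduces to \((N+1)a|x|\), which is the value that produces equality for balls. A cruder alternative is \(F=(q(x)/|x|)\,x\), whose divergence is \((N+1)q(x)/|x|\). However, in dimension two with \(A\) diagonal I checked that this choice gives a constant that is too weak, so the field must be adapted to \(A\) as above.

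\emph{Bathtub step.} Applying Lemma~\ref{lem:homogeneous-bathtub} with \(E=\Omega\) gives
\[
\int_\Omega g\ \ge\ \frac{N}{N+1}\,\kappa^{-1/N}|\Omega|^{(N+1)/N},
\qquad
\kappa=|\{g<1\}|=\frac1N\int_{\mathbb S^{N-1}}g(\theta)^{-N}\,d\theta .
\]
The claim \eqref{eq:anisotropic} therefore follows once we show
\[
\kappa\le\Bigl(\tfrac{N+1}{N}\Bigr)^{N}\omega_N\,(\det A)^{-1}.
\]
To estimate \(\kappa\), I will change variables \(y=Bx\) in \(\{g<1\}\), which introduces the Jacobian factor \((\det A)^{-1/2}\). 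I will then bound \(\tilde g(y)=|y|\tr B+y^\top By/|y|\) from below by a multiple of a quantity of the form \(\sqrt{y^\top Cy}\), or by an AM--GM product of such quantities. This reduces the spherical integral to one covered by Lemma~\ref{lem:spherical-moment} with an exponent \(p\le N/2\). The endpoint identity \eqref{eq:spherical-endpoint} shows the exact value \(\omega_N(\det C)^{-1/2}\) for \(p=N/2\), and Jensen's inequality handles the intermediate exponents. The AM--GM inequality \(\tr B\ge N(\det A)^{1/(2N)}\) is the natural source of the remaining \((\det A)^{-1/2}\).

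\emph{Main obstacle.} The hard step is the last inequality: checking that the specific \(g\) coming from the chosen field has small enough sublevel volume \(\kappa\), with the sharp constant and equality only when \(A\) is a multiple of \(I\). If the field \(|Bx|Bx\) turns out to lose a constant, I will optimize over a family such as \(F=|Px|\,Qx\) with \(|Px||Qx|\le q(x)\). The goal is to choose \(P,Q\) so that \(\operatorname{div}F\) is exactly \((N+1)c\sqrt{x^\top Cx}\) for some \(C\) with \(c^N(\det C)^{1/2}\ge(\det A)\). Then Lemma~\ref{lem:spherical-moment} at \(p=N/2\) gives \(\kappa\) in closed form, and the rest of the argument above goes through unchanged.
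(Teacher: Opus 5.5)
Your setup is the paper's own: the field $F(x)=|Bx|\,Bx$ with $B=A^{1/2}$, the divergence $g(x)=\tilde g(Bx)$, and the reduction through Lemma~\ref{lem:homogeneous-bathtub}. Applying it to $E=\Omega$ with $g$, rather than to $B\Omega$ with $\tilde g$, changes nothing, since $\kappa=(\det B)^{-1}|\{\tilde g<1\}|$. There are two problems.

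\emph{The target is misstated.} The requirement $\frac{N}{N+1}\kappa^{-1/N}\ge N\omega_N^{-1/N}(\det A)^{1/N}$ is equivalent to
\[
\kappa\le\frac{\omega_N}{(N+1)^N\det A},
\]
not to $\kappa\le((N+1)/N)^N\omega_N(\det A)^{-1}$. Your bound would only give the inequality with the constant $N$ replaced by $N^2/(N+1)^2$. In the case $A=I$ one has $g=(N+1)|x|$ and $\kappa=\omega_N(N+1)^{-N}$, so the correct bound holds with equality and has no slack. Any lossy lower bound for $\tilde g$ therefore fails. For example, dropping the term $\theta^\top B\theta$ and using only $\tr B\ge N(\det B)^{1/N}$ loses a factor $((N+1)/N)^N$.

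\emph{The decisive estimate is left open.} You flag the bound on $\kappa$ as the main obstacle and do not supply it. The missing idea is a single AM--GM step with $N+1$ terms. It produces a fractional power of $\theta^\top B\theta$, not a square root of a quadratic form. Let $\lambda_1,\dots,\lambda_N$ be the eigenvalues of $B$. AM--GM applied to $\lambda_1,\dots,\lambda_N,\theta^\top B\theta$ gives
\[
\tr B+\theta^\top B\theta\ge(N+1)(\det B)^{1/(N+1)}(\theta^\top B\theta)^{1/(N+1)},
\]
and hence
\begin{align*}
|\{\tilde g<1\}|
&=\frac1N\int_{\mathbb S^{N-1}}(\tr B+\theta^\top B\theta)^{-N}\,d\theta\\
&\le\frac{(\det B)^{-N/(N+1)}}{N(N+1)^N}\int_{\mathbb S^{N-1}}(\theta^\top B\theta)^{-N/(N+1)}\,d\theta
\le\frac{\omega_N}{(N+1)^N\det B}.
\end{align*}
The last step is Lemma~\ref{lem:spherical-moment} with the intermediate exponent $p=N/(N+1)\le N/2$. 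Multiplying by the Jacobian factor $(\det B)^{-1}$ gives exactly the correct target.

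So the field $|Bx|\,Bx$ loses no constant, and your fallback family $|Px|\,Qx$ is unnecessary. With this step inserted and the target corrected, your argument becomes the paper's proof.
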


\begin{proof}
Let \(C=A^{1/2}\), and define \(F(x)=|Cx|\,Cx\).
The vector field \(F\) is \(C^1\) on \(\mathbb R^N\), with \(DF(0)=0\). Moreover,
\[
|F(x)|=|Cx|^2=x^\top Ax.
\]
Hence, for a.e. \(x\in\partial\Omega\),
\[
F(x)\cdot\nu(x)\le |F(x)|=x^\top Ax.
\]
The divergence theorem for bounded Lipschitz domains gives
\begin{equation}\label{eq:div-start}
\int_{\partial\Omega}x^\top Ax\,dS
\ge
\int_{\partial\Omega}F\cdot\nu\,dS
=
\int_{\Omega}\operatorname{div}F\,dx.
\end{equation}

Set \(y=Cx\). A direct computation gives, for \(y\ne0\),
\[
\operatorname{div}_xF(x)
=
|y|\tr C+\frac{y^\top Cy}{|y|}.
\]
The right-hand side extends continuously to \(y=0\) by assigning value \(0\). Therefore
\begin{equation}\label{eq:change-var}
\int_{\Omega}\operatorname{div}F\,dx
=
(\det C)^{-1}\int_{C\Omega}g_C(y)\,dy,
\end{equation}
where
\[
g_C(y)=
\begin{cases}
|y|\tr C+\dfrac{y^\top Cy}{|y|}, & y\ne0,\\[6pt]
0, & y=0.
\end{cases}
\]
The function \(g_C\) is continuous, positive on \(\mathbb S^{N-1}\), and homogeneous of degree one.

Let
\[
K_C=\{y\in\mathbb R^N:g_C(y)<1\},
\qquad
\kappa_C=|K_C|.
\]
By Lemma~\ref{lem:homogeneous-bathtub}, for every measurable set \(E\subset\mathbb R^N\) with \(|E|=W\),
\begin{equation}\label{eq:bathtub}
\int_E g_C(y)\,dy
\ge
\frac{N}{N+1}\kappa_C^{-1/N}W^{(N+1)/N}.
\end{equation}

It remains to estimate \(\kappa_C\). In polar coordinates,
\begin{equation}\label{eq:kappa-polar}
\kappa_C
=
\frac1N\int_{\mathbb S^{N-1}}
(\tr C+\theta^\top C\theta)^{-N}\,d\theta.
\end{equation}
Let \(\lambda_1,\ldots,\lambda_N\) be the eigenvalues of \(C\). For each \(\theta\in\mathbb S^{N-1}\), the arithmetic-geometric mean inequality applied to the \(N+1\) positive numbers
\[
\lambda_1,\ldots,\lambda_N,\theta^\top C\theta
\]
gives
\begin{equation}\label{eq:amgm}
\tr C+\theta^\top C\theta
\ge
(N+1)(\det C)^{1/(N+1)}(\theta^\top C\theta)^{1/(N+1)}.
\end{equation}
Combining \eqref{eq:kappa-polar}, \eqref{eq:amgm}, and Lemma~\ref{lem:spherical-moment} with \(p=N/(N+1)\), which is admissible since \(N\ge2\), we obtain
\begin{align}
\kappa_C
&\le
\frac1N (N+1)^{-N}(\det C)^{-N/(N+1)}
\int_{\mathbb S^{N-1}}(\theta^\top C\theta)^{-N/(N+1)}\,d\theta \notag\\
&\le
\frac1N (N+1)^{-N}(\det C)^{-N/(N+1)}
N\omega_N(\det C)^{-1/(N+1)} \notag\\
&=
\frac{\omega_N}{(N+1)^N\det C}.
\label{eq:kappa-bound}
\end{align}

Now apply \eqref{eq:bathtub} to \(E=C\Omega\), whose volume is $|C\Omega|=(\det C)|\Omega|$. Using \eqref{eq:kappa-bound}, we get
\begin{align}
\int_{C\Omega}g_C(y)\,dy
&\ge
\frac{N}{N+1}\kappa_C^{-1/N} |C\Omega|^{(N+1)/N} \notag\\
&\ge
N\omega_N^{-1/N}(\det C)^{1/N}
(\det C)^{(N+1)/N}|\Omega|^{(N+1)/N}.
\label{eq:g-lower}
\end{align}
Combining \eqref{eq:div-start}, \eqref{eq:change-var}, and \eqref{eq:g-lower}, we find
\[
\int_{\partial\Omega}x^\top Ax\,dS
\ge
N\omega_N^{-1/N}(\det C)^{2/N}|\Omega|^{(N+1)/N}.
\]
Since \(\det A=(\det C)^2\), this is \eqref{eq:anisotropic}.
\end{proof}

We now derive the determinant estimate for \(M_\Omega\).

\begin{theorem}\label{thm:det-bound}
Let \(\Omega\subset\mathbb R^N\) be a bounded Lipschitz domain of positive measure. Define the matrix
\[
M_\Omega:=\int_{\partial\Omega}x x^\top\,dS.
\]
Then
\begin{equation}\label{eq:det-bound}
\det M_\Omega\ge \frac{|\Omega|^{N+1}}{\omega_N}.
\end{equation}
Equality is attained by balls centered at the origin. Conversely, if equality
holds, then \(\Omega\) agrees with a ball centered at the origin up to a set of
Lebesgue measure zero.
\end{theorem}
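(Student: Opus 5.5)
Because \(\tr(AM_\Omega)=\int_{\partial\Omega}x^\top Ax\,dS\), the determinant bound follows from Lemma~\ref{lem:anisotropic} by choosing \(A\) to be a suitable multiple of \(M_\Omega^{-1}\).

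First, \(M_\Omega\) is positive definite. For a unit vector \(v\) we have \(v^\top M_\Omega v=\int_{\partial\Omega}(v\cdot x)^2\,dS\). Applying Lemma~\ref{lem:anisotropic} with \(A=vv^\top+\varepsilon I\) and letting \(\varepsilon\to0\) only gives \(v^\top M_\Omega v\ge0\), which is not enough. Instead, suppose \(v^\top M_\Omega v=0\). Then \(\partial\Omega\) is contained, up to an \(\mathcal H^{N-1}\)-null set, in the hyperplane \(v^\perp\). This is impossible: a bounded Lipschitz domain has boundary points on both extreme supporting hyperplanes \(\{v\cdot x=\max_{\overline\Omega}v\cdot x\}\) and \(\{v\cdot x=\min_{\overline\Omega}v\cdot x\}\), these hyperplanes are distinct, and near each such point the boundary is a Lipschitz graph carrying positive surface measure in a neighborhood away from \(v^\perp\). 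So \(M_\Omega\) is positive definite.

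Next, apply Lemma~\ref{lem:anisotropic} with \(A=M_\Omega^{-1}\). This gives
\[
N=\tr(I)=\int_{\partial\Omega}x^\top M_\Omega^{-1}x\,dS\ge N\omega_N^{-1/N}(\det M_\Omega)^{-1/N}|\Omega|^{(N+1)/N},
\]
and rearranging yields \eqref{eq:det-bound}. Equivalently, one can use \(\inf_{\det A=1}\tr(AM)=N(\det M)^{1/N}\). For the ball \(B_R\), symmetry gives \(M=\frac{1}{N}R^2\,N\omega_NR^{N-1}I=\omega_NR^{N+1}I\), so \(\det M=\omega_N^NR^{N(N+1)}=|B_R|^{N+1}/\omega_N\), and equality holds.

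The equality case is the main obstacle, because Lemma~\ref{lem:anisotropic} was stated without its equality case. I would therefore trace equality back through its proof with \(C=M_\Omega^{-1/2}\).

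\begin{enumerate}[label=(\roman*)]
\item Equality in \eqref{eq:amgm} for a.e.\ \(\theta\) forces \(\lambda_1=\cdots=\lambda_N=\theta^\top C\theta\). Alternatively, equality in Lemma~\ref{lem:spherical-moment} with \(0<p=N/(N+1)<N/2\) forces \(C=cI\). Either way \(C=cI\).
\item Then \(g_C(y)=(N+1)c|y|\), and equality in the bathtub step, Lemma~\ref{lem:homogeneous-bathtub}, forces \(C\Omega\) to be a centered ball up to a null set. Hence \(\Omega\) is a ball centered at the origin up to a null set.
\end{enumerate}

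To make this rigorous, note that every inequality in the chain \eqref{eq:div-start}--\eqref{eq:g-lower} goes in the same direction. Equality in the final bound therefore forces equality in each step, in particular in the bathtub step and in the \(\kappa_C\) estimate. The only delicate point is that equality in \(\kappa_C\le\omega_N/((N+1)^N\det C)\) requires equality a.e.\ in the integrated AM--GM, and that is what yields \(C=cI\).
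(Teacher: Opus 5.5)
Your proof is correct and follows the paper's route. You apply Lemma~\ref{lem:anisotropic} with \(A=M_\Omega^{-1}\) and trace equality back through the chain. This gives \(C=cI\), either via pointwise AM--GM equality or, as the paper does, via the equality case of Lemma~\ref{lem:spherical-moment}, and then a centered ball from the equality case of Lemma~\ref{lem:homogeneous-bathtub}.

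The only real divergence is the positive definiteness of \(M_\Omega\). The paper argues that a linear function with zero boundary trace lies in \(H^1_0(\Omega)\) and therefore has zero Dirichlet energy. You instead use a geometric argument: a boundary point off \(v^\perp\) near which the boundary has positive surface measure. This works, but only at a point on whichever extreme supporting hyperplane differs from \(v^\perp\), since one of them may coincide with \(v^\perp\). The distinctness of the two hyperplanes guarantees such a point exists.
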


\begin{proof}
First check \(M_\Omega\) is positive definite. Indeed, if \(a\in\mathbb R^N\) and
\[
a^\top M_\Omega a
=
\int_{\partial\Omega}(a\cdot x)^2\,dS
=
0,
\]
then the trace of the affine function \(u(x)=a\cdot x\) vanishes on \(\partial\Omega\). Since \(\Omega\) is Lipschitz, this means \(u\in H^1_0(\Omega)\). The function \(u\) is weakly harmonic in \(\Omega\), so using \(u\) as a test function gives
\[
0=\int_\Omega |\nabla u|^2\,dx=|a|^2|\Omega|.
\]
Thus \(a=0\), and \(M_\Omega\) is positive definite.

Apply Lemma~\ref{lem:anisotropic} with \(A=M_\Omega^{-1}\). Since
\[
\int_{\partial\Omega}x^\top M_\Omega^{-1}x\,dS
=
\tr\left(M_\Omega^{-1}\int_{\partial\Omega}x x^\top\,dS\right)
=
\tr I
=
N,
\]
we get
\[
N
\ge
N\omega_N^{-1/N}(\det M_\Omega^{-1})^{1/N}|\Omega|^{(N+1)/N}.
\]
After rearranging,
\[
\det M_\Omega\ge \frac{|\Omega|^{N+1}}{\omega_N}.
\]

Assume now that equality holds in \eqref{eq:det-bound}. Set $A=M_\Omega^{-1}$ and $C=A^{1/2}$. Then the application of Lemma~\ref{lem:anisotropic} above has equal left- and
right-hand sides. Indeed, with \(F(x)=|Cx|Cx\), the proof of Lemma
\ref{lem:anisotropic} gives the chain
\begin{align*}
N
&=
\int_{\partial\Omega}x^\top Ax\,dS \\
&\ge
\int_{\partial\Omega}F\cdot\nu\,dS \\
&=
(\det C)^{-1}\int_{C\Omega}g_C(y)\,dy \\
&\ge
\frac{N}{N+1}(\det C)^{-1}\kappa_C^{-1/N}|C\Omega|^{(N+1)/N} \\
&\ge
N\omega_N^{-1/N}(\det C)^{2/N}|\Omega|^{(N+1)/N}.
\end{align*}
Since equality holds in \eqref{eq:det-bound}, the last expression is also
equal to \(N\). Consequently every inequality in the displayed chain is an
equality. In particular, equality holds in \eqref{eq:kappa-bound} and in the
application of Lemma~\ref{lem:homogeneous-bathtub} to \(E=C\Omega\).

We first show that \(C\) is scalar. The estimate \eqref{eq:kappa-bound}
is a two-step chain: the first inequality comes from the AM--GM estimate
\eqref{eq:amgm}, and the second from Lemma~\ref{lem:spherical-moment} with
\(p=N/(N+1)\). Since equality holds in \eqref{eq:kappa-bound}, equality must
hold in both steps. In particular, equality holds in Lemma
\ref{lem:spherical-moment} for \(p=N/(N+1)\). Since \(N\ge2\), we have
\[
0<\frac{N}{N+1}<\frac N2.
\]
The equality case in Lemma~\ref{lem:spherical-moment} therefore gives
\(C=cI\) for some \(c>0\).

For this scalar matrix \(C\), we have $g_C(y)=(N+1)c|y|$.
Equality in Lemma~\ref{lem:homogeneous-bathtub}, applied to \(E=C\Omega\), gives $C\Omega=\{y:g_C(y)<s\}$
up to a null set for some \(s>0\). Since \(g_C(y)=(N+1)c|y|\), this sublevel set is a ball centered at the origin. As \(C=cI\), \(\Omega\) itself agrees with a centered ball up to a null set. Conversely, a centered ball gives equality directly.
\end{proof}

\section{Proof of the Steklov product inequality}\label{sec:steklov-product}

We now prove Theorem~\ref{thm:main}.

\begin{proof}[Proof of Theorem \ref{thm:main}]
Let
\[
b=\frac{1}{|\partial\Omega|}\int_{\partial\Omega}x\,dS
\]
be the boundary barycenter of \(\Omega\). Replacing \(\Omega\) by
\(\Omega-b\), we may assume that
\[
\int_{\partial\Omega}x\,dS=0.
\]
This translation preserves both the volume and the Steklov eigenvalues.

For \(u\in H^1(\Omega)\), define the Steklov Rayleigh quotient by
\[
\mathcal R(u):=
\begin{cases}
\dfrac{\int_\Omega |\nabla u|^2\,dx}{\int_{\partial\Omega}u^2\,dS},
& \text{if } \int_{\partial\Omega}u^2\,dS>0,\\[10pt]
+\infty,
& \text{if } \int_{\partial\Omega}u^2\,dS=0.
\end{cases}
\]
For bounded Lipschitz domains, the trace embedding \(H^1(\Omega)\hookrightarrow L^2(\partial\Omega)\) is compact. The positive Steklov eigenvalues satisfy the following variational characterization; this is the standard Steklov variational principle with the zero eigenvalue removed (see, for instance, \cite[Theorem~7.1.9]{LMP23} and \cite[Eq.~(4.1.2)]{GirouardPolterovich2017}):
\[
\sigma_j(\Omega)
=
\min_{\substack{V\subset H^1(\Omega),\ \dim V=j\\
\int_{\partial\Omega}u\,dS=0\ \text{for every }u\in V}}
\ \max_{0\ne u\in V}\mathcal R(u),
\qquad j\ge1.
\]

Let
\[
L:=\{u_\alpha:\alpha\in\mathbb R^N\},
\qquad
u_\alpha(x):=\alpha\cdot x.
\]
The map \(\alpha\mapsto u_\alpha\) is injective, since
\(\nabla u_\alpha=\alpha\). Hence \(\dim L=N\). By the boundary centering,
\[
\int_{\partial\Omega}u_\alpha\,dS
=
\alpha\cdot\int_{\partial\Omega}x\,dS
=
0,
\]
so every function in \(L\) is orthogonal to constants in
\(L^2(\partial\Omega)\). Moreover,
\[
\|u_\alpha\|_{L^2(\partial\Omega)}^2
=
\int_{\partial\Omega}(\alpha\cdot x)^2\,dS
=
\alpha^\top M_\Omega\alpha.
\]
Since \(M_\Omega\) is positive definite by Theorem~\ref{thm:det-bound},
this quantity is positive whenever \(\alpha\ne0\). Thus every nonzero function
in \(L\) has a nonzero trace in \(L^2(\partial\Omega)\), and the Rayleigh
quotient is well defined on \(L\setminus\{0\}\).

For \(u_\alpha(x)=\alpha\cdot x\), we have
\[
\int_\Omega |\nabla u_\alpha|^2\,dx
=
|\Omega|\,|\alpha|^2.
\]
Together with the preceding boundary identity, this gives, for
\(0\ne\alpha\in\mathbb R^N\),
\begin{equation}\label{eq:coordinate-rayleigh}
\mathcal R(u_\alpha)
=
\frac{|\Omega|\,|\alpha|^2}{\alpha^\top M_\Omega\alpha}.
\end{equation}

Motivated by \eqref{eq:coordinate-rayleigh}, for \(j=1,\ldots,N\), define
\[
\rho_j
:=
\min_{\substack{S\subset\mathbb R^N\\ \dim S=j}}
\ \max_{0\ne\alpha\in S}
\frac{|\Omega|\,|\alpha|^2}{\alpha^\top M_\Omega\alpha}.
\]
Since \(M_\Omega\) is positive definite, the quotient appearing above is
positive and is bounded away from zero on \(\mathbb R^N\setminus\{0\}\). Hence
\(\rho_j>0\) for every \(j\). Moreover, the min--max definition immediately
gives
\[
\rho_1\le\cdots\le\rho_N.
\]
Indeed, if \(T\subset\mathbb R^N\) has dimension \(j+1\) and
\(S\subset T\) has dimension \(j\), then
\[
\max_{0\ne\alpha\in T}
\frac{|\Omega|\,|\alpha|^2}{\alpha^\top M_\Omega\alpha}
\ge
\max_{0\ne\alpha\in S}
\frac{|\Omega|\,|\alpha|^2}{\alpha^\top M_\Omega\alpha}
\ge
\rho_j.
\]
Taking the minimum over all such \(T\) yields \(\rho_{j+1}\ge\rho_j\).

We first compare the numbers \(\rho_j\) with the Steklov eigenvalues. If
\(S\subset\mathbb R^N\) has dimension \(j\), set
\[
V_S:=\{u_\alpha:\alpha\in S\}\subset L.
\]
Since the map \(\alpha\mapsto u_\alpha\) is injective, \(\dim V_S=j\).
Moreover, by the boundary centering,
\[
\int_{\partial\Omega}u_\alpha\,dS
=
\alpha\cdot\int_{\partial\Omega}x\,dS
=
0
\]
for every \(\alpha\in S\). Thus \(V_S\) is an admissible subspace in the
min--max formula for \(\sigma_j(\Omega)\). Hence
\[
\sigma_j(\Omega)
\le
\max_{0\ne u\in V_S}\mathcal R(u)
=
\max_{0\ne\alpha\in S}
\frac{|\Omega|\,|\alpha|^2}{\alpha^\top M_\Omega\alpha}.
\]
Taking the minimum over all \(j\)-dimensional subspaces
\(S\subset\mathbb R^N\), we obtain
\begin{equation}\label{eq:sigma-rho}
\sigma_j(\Omega)\le \rho_j,
\qquad j=1,\ldots,N.
\end{equation}

We now identify the numbers \(\rho_j\). Since \(M_\Omega\) is positive
definite, the symmetric matrix $|\Omega|\,M_\Omega^{-1}$ is positive definite. For \(0\ne\alpha\in\mathbb R^N\), put
\(\beta=M_\Omega^{1/2}\alpha\). Then
\[
\frac{|\Omega|\,|\alpha|^2}{\alpha^\top M_\Omega\alpha}
=
\frac{\beta^\top \bigl(|\Omega|\,M_\Omega^{-1}\bigr)\beta}
{\beta^\top\beta}.
\]
Since the map \(\alpha\mapsto\beta=M_\Omega^{1/2}\alpha\) is an isomorphism
of \(\mathbb R^N\), the Courant--Fischer min--max principle
\cite[Theorem~4.2.6]{HornJohnson2013} shows that
\(\rho_1,\ldots,\rho_N\) are precisely the eigenvalues of \(|\Omega|\,M_\Omega^{-1}\), listed in
nondecreasing order. Therefore,
\begin{equation}\label{eq:ritz-product}
\prod_{j=1}^N\rho_j
=
\det\bigl(|\Omega|\,M_\Omega^{-1}\bigr)
=
\frac{|\Omega|^N}{\det M_\Omega}.
\end{equation}

Therefore, using \eqref{eq:sigma-rho}, \eqref{eq:ritz-product} and Theorem~\ref{thm:det-bound},
\[
\prod_{j=1}^{N}\sigma_j(\Omega)
\le
\prod_{j=1}^{N}\rho_j
=
\frac{|\Omega|^N}{\det M_\Omega}
\le
\frac{|\Omega|^N}{|\Omega|^{N+1}/\omega_N}
=
\frac{\omega_N}{|\Omega|}.
\]
This proves \eqref{eq:main}.

If equality holds in \eqref{eq:main}, then the chain of inequalities above has equal endpoints. Hence
\[
\det M_\Omega=\frac{|\Omega|^{N+1}}{\omega_N},
\]
so equality holds in Theorem~\ref{thm:det-bound}. After the centering
translation, \(\Omega\) therefore agrees with a ball centered at the origin up
to a null set. Undoing the translation gives the stated rigidity assertion.
Conversely, balls give equality by \eqref{eq:ball-value}. Hence the proof is completed.
\end{proof}

\section*{Acknowledgments and AI disclosure}

During the preparation of this work the authors used ChatGPT (OpenAI) for exploratory mathematical discussions. After using this tool, the authors reviewed and edited the content as needed and take full responsibility for the content of the article.


\begin{thebibliography}{99}

\bibitem{BogoselBucurGiacomini2017}
B.~Bogosel, D.~Bucur, and A.~Giacomini,
\emph{Optimal shapes maximizing the Steklov eigenvalues},
SIAM J. Math. Anal. \textbf{49} (2017), no.~2, 1645--1680.

\bibitem{BrascoDePhilippisRuffini2012}
L.~Brasco, G.~De Philippis, and B.~Ruffini,
\emph{Spectral optimization for the Stekloff--Laplacian: the stability issue},
J. Funct. Anal. \textbf{262} (2012), no.~11, 4675--4710.

\bibitem{Brock2001}
F.~Brock,
\emph{An isoperimetric inequality for eigenvalues of the Stekloff problem},
Z. Angew. Math. Mech. \textbf{81} (2001), no.~1, 69--71.

\bibitem{ColboisGirouardGordonSher2024}
B.~Colbois, A.~Girouard, C.~Gordon, and D.~Sher,
\emph{Some recent developments on the Steklov eigenvalue problem},
Rev. Mat. Complut. \textbf{37} (2024), no.~1, 1--161.

\bibitem{FraserSchoen2019}
A.~Fraser and R.~Schoen,
\emph{Shape optimization for the Steklov problem in higher dimensions},
Adv. Math. \textbf{348} (2019), 146--162.

\bibitem{GirouardKarpukhinLagace2021}
A.~Girouard, M.~Karpukhin, and J.~Lagac\'e,
\emph{Continuity of eigenvalues and shape optimisation for Laplace and Steklov problems},
Geom. Funct. Anal. \textbf{31} (2021), 513--561.

\bibitem{GirouardPolterovich2010HPS}
A.~Girouard and I.~Polterovich,
\emph{On the Hersch--Payne--Schiffer estimates for the eigenvalues of the Steklov problem},
Funct. Anal. Appl. \textbf{44} (2010), no.~2, 106--117.

\bibitem{GirouardPolterovich2017}
A.~Girouard and I.~Polterovich,
\emph{Spectral geometry of the Steklov problem},
J. Spectral Theory \textbf{7} (2017), no.~2, 321--359.

\bibitem{Henrot2006}
A.~Henrot,
\emph{Extremum Problems for Eigenvalues of Elliptic Operators},
Frontiers in Mathematics, Birkh\"auser, Basel, 2006.

\bibitem{HenrotPhilippinSafoui2008}
A.~Henrot, G.~A. Philippin, and A.~Safoui,
\emph{Some isoperimetric inequalities with application to the Stekloff problem},
J. Convex Anal. \textbf{15} (2008), no.~3, 581--592.

\bibitem{HerschPayneSchiffer1974}
J.~Hersch, L.~E. Payne, and M.~M. Schiffer,
\emph{Some inequalities for Stekloff eigenvalues},
Arch. Rational Mech. Anal. \textbf{57} (1975), 99--114.

\bibitem{HornJohnson2013}
R.~A. Horn and C.~R. Johnson,
\emph{Matrix Analysis},
2nd ed.,
Cambridge University Press, Cambridge, 2013.

\bibitem{LiebLoss2001}
E.~H.~Lieb and M.~Loss,
\emph{Analysis},
2nd ed.,
Graduate Studies in Mathematics, vol.~14,
American Mathematical Society, Providence, RI, 2001.

\bibitem{LMP23}
M.~Levitin, D.~Mangoubi, and I.~Polterovich,
\emph{Topics in Spectral Geometry},
Graduate Studies in Mathematics, vol.~237,
American Mathematical Society, Providence, RI, 2023.

\bibitem{Weinstock1954}
R.~Weinstock,
\emph{Inequalities for a classical eigenvalue problem},
J. Rational Mech. Anal. \textbf{3} (1954), 745--753.
\end{thebibliography}
\end{document}